\newtheorem{theorem}{Theorem}[section]
\newtheorem{lemma}[theorem]{Lemma}
\theoremstyle{definition}
\newtheorem{definition}[theorem]{Definition}
\newcommand{\BigO}[1]{\ensuremath{\operatorname{O}\bigl(#1\bigr)}}
\def\dotminussym#1#2{%
  \setbox0=\hbox{$\m@th#1-$}%
  \kern.5\wd0%
  \hbox to 0pt{\hss\hbox{$\m@th#1-$}\hss}%
  \raise.6\ht0\hbox to 0pt{\hss$\m@th#1.$\hss}%
  \kern.5\wd0}
\mathchardef\mhyphen="2D
\begin{document}

\title{The Regularity Lemma with bounded VC Dimension}
\author{Henry Towsner}
\date{\today}
\thanks{Partially supported by NSF grant DMS-1340666.}
\address {Department of Mathematics, University of Pennsylvania, 209 South 33rd Street, Philadelphia, PA 19104-6395, USA}
\email{htowsner@math.upenn.edu}
\urladdr{\url{www.math.upenn.edu/~htowsner}}


\maketitle

\section{Introduction}

In this note we give a proof of Szemer\'edi's celebrated regularity lemma \cite{MR540024} in the special case where the graph has bounded VC dimension.  In the general case it is known that the tower exponential bounds given by Szemer\'edi's proof are essentially optimal \cite{MR1445389}, but in this special case we obtain doubly exponential bounds.  After placing this online, we learned that a stronger result had already been obtained by Lov\'asz and Szegedy \cite{MR2815610}, giving polynomial bounds (and a stronger condition on the partition).  We leave this note online since the proof may still be of interest.

The results here are reminiscent of recent results by Malliaris and Shelah \cite{2011arXiv1102.3904M} obtaining improved bounds for the regularity lemma under various model theoretic assumptions (including bounded VC dimension, under its model theoretic name, NIP \cite{MR1171563}).  However their results focus on eliminating or controlling irregular pairs, while the result here keeps the irregular pairs but requires fewer components in the partition.

\section{$\epsilon$-Approximations}

Throughout this note, we will be concerned with large finite sets $X$ and $Y$.  We will use measure-theoretic notation for the normalized counting measure: when $X'\subseteq X$, $\mu(X')=\frac{|X'|}{|X|}$, when $Y'\subseteq Y$, $\mu(Y')=\frac{|Y'|}{|Y|}$, and when $E\subseteq X\times Y$, $\mu(E)=\frac{|E|}{|X|\cdot |Y|}$.

When $E\subseteq X\times Y$, for $x\in X$, we write $E_x=\{y\mid (x,y)\in E\}$ and for $y\in Y$ we write $E^y=\{x\mid (x,y)\in E\}$.



\begin{definition}
Let $E\subseteq X\times Y$.  If $I\subseteq Y$, we say $\{E_x\}_{x\in X}$ \emph{shatters} $I$ if for each $J\subseteq I$, there is an $x\in X$ with $E_x\cap I=J$.

The \emph{VC dimension} of a collection $\{E_x\}_{x\in X}$ is the supremum of $|I|$ for those $I\subseteq Y$ such that $\{E_x\}_{x\in X}$ shatters $I$.  The \emph{dual VC dimension} of $\{E_x\}_{x\in X}$ is the VC dimension of $\{E^y\}_{y\in Y}$.

When $E\subseteq X\times Y$, the VC dimension of $E$ is the larger of the VC dimension of $\{E_x\}_{x\in X}$ and the dual VC dimension of $\{E_x\}_{x\in X}$.
\end{definition}
An equivalent definition of the dual VC dimension is the supremum of $|I|$ where $I\subseteq X$ and for each $J\subseteq I$, there is a $y\in Y$ such that $y\in E_x$ iff $x\in J$.  The definition of VC dimension in terms of the collection $\{E_x\}_{x\in X}$ is the standard one, but for us it will generally be more convenient to view VC dimension as a property of the set of pairs $E$ (and we are only interested in the symmetric case where we look at the larger of the VC and dual VC dimensions).

Recall the following standard properties of VC dimension:
\begin{lemma}
\begin{enumerate}
\item If $\{E_x\}_{x\in X}$ has VC dimension $d$, the dual VC dimension of $\{E_x\}_{x\in X}$ is less than $2^{d+1}$,
\item If $X'\subseteq X$ and $Y'\subseteq Y$ then $E\cap(X'\times Y')$ has VC dimension no larger than the VC dimension of $E$,
\item {}\emph{[Shelah-Sauer \cite{MR0307902,MR0307903} ]} If $\{E_x\}_{x\in X}$ has VC dimension $d$ then for any set $I\subseteq Y$ with $|I|\geq d$, 
\[|\{E_x\cap I\mid x\in X\}|=|\{J\subseteq I\mid \exists x\in X\ J=E_x\cap I\}|\leq \left(\frac{e|I|}{d}\right)^d.\]
\end{enumerate}
\end{lemma}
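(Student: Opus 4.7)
The plan is to treat the three parts separately, since they are essentially independent.

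Part (2) is immediate: if $I \subseteq Y'$ is shattered by $\{(E \cap (X' \times Y'))_x\}_{x \in X'}$, then the witnesses lie in $X' \subseteq X$ and serve just as well to shatter $I$ with respect to $\{E_x\}_{x \in X}$. The dual statement is symmetric, so the VC dimension of the restriction is bounded by the VC dimension of $E$.

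For part (1), I would proceed by contrapositive using a coding trick. Suppose the dual VC dimension is at least $2^{d+1}$. Then we can index a dually shattered family by $\{x_\sigma : \sigma \subseteq \{0,\ldots,d\}\}$, so that for every subset $T$ of this family there is some $y \in Y$ with $E^y \cap \{x_\sigma\}_\sigma = T$. Specializing this to the $d+1$ subsets $T_i = \{x_\sigma : i \in \sigma\}$ for $i = 0,\ldots,d$, we obtain points $y_0,\ldots,y_d \in Y$ with the property that $y_i \in E_{x_\sigma}$ iff $i \in \sigma$. Now $\{y_0,\ldots,y_d\}$ is shattered by $\{E_x\}_{x \in X}$: given any $\tau \subseteq \{0,\ldots,d\}$, the element $x_\tau$ satisfies $E_{x_\tau} \cap \{y_0,\ldots,y_d\} = \{y_i : i \in \tau\}$. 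This contradicts the assumption that the VC dimension of $\{E_x\}_{x \in X}$ is $d$.

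For part (3), I would invoke the classical Sauer-Shelah proof, establishing first the sharper bound $|\{E_x \cap I : x \in X\}| \leq \sum_{i=0}^{d} \binom{|I|}{i}$ and then deducing $\sum_{i=0}^{d} \binom{|I|}{i} \leq (e|I|/d)^d$ by the standard estimate (multiplying the left side by $(|I|/d)^{d-i} \geq 1$ inside the sum and comparing with the binomial expansion of $(1 + d/|I|)^{|I|} \leq e^d$). The combinatorial bound itself is proved by induction on $|I|$: pick $y^* \in I$, and compare the trace family $\mathcal{A} = \{E_x \cap I : x \in X\}$ with $\mathcal{A}' = \{A \setminus \{y^*\} : A \in \mathcal{A}\}$ and $\mathcal{A}'' = \{A \subseteq I \setminus \{y^*\} : A \in \mathcal{A} \text{ and } A \cup \{y^*\} \in \mathcal{A}\}$. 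One checks that $|\mathcal{A}| = |\mathcal{A}'| + |\mathcal{A}''|$, that $\mathcal{A}'$ as a family on $I \setminus \{y^*\}$ has VC dimension at most $d$, and that $\mathcal{A}''$ has VC dimension at most $d-1$ (because adding $y^*$ to any set shattered by $\mathcal{A}''$ would be shattered by $\mathcal{A}$). The main obstacle is tracking the VC dimensions of these auxiliary families cleanly, but this is routine.
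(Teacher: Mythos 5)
Your proposal is correct. Note that the paper itself offers no proof of this lemma: it is stated as a collection of standard facts to be recalled, with citations for the Sauer--Shelah bound, so there is nothing to compare against except the literature. Your arguments are exactly the standard ones. Part (2) is the trivial restriction observation. Part (1) is the classical coding argument: indexing a dually shattered set of size $2^{d+1}$ by the subsets of $\{0,\dots,d\}$ and extracting the $d+1$ witnesses $y_i$ for the ``slices'' $T_i=\{x_\sigma: i\in\sigma\}$ produces a primally shattered set of size $d+1$. The one point you should make explicit is that the $y_i$ are pairwise distinct; this follows by testing against $x_{\{i\}}$, which contains $y_i$ but not $y_{i'}$ for $i'\neq i$. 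Part (3) is the standard shifting/induction proof of Sauer--Shelah together with the usual estimate $\sum_{i=0}^{d}\binom{n}{i}\leq (n/d)^d\sum_{i=0}^{d}\binom{n}{i}(d/n)^i\leq (n/d)^d(1+d/n)^n\leq (en/d)^d$ for $n\geq d$; you should just remember to anchor the double induction with the base cases $d=0$ (or $d$ exhausted) and $|I|\leq d$, where the trace family is bounded by $2^{|I|}\leq\sum_{i\leq d}\binom{|I|}{i}$. With those two routine points filled in, the write-up is complete.
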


\begin{theorem}[$\epsilon$-net Theorem \cite{MR884223,MR1139078}]
If $\{E_x\}_{x\in X}$ has VC dimension at most $d$, $d\geq 2$, then for sufficiently large $r\geq 2$ there is a set $\hat Y\subseteq Y$ such that $|\hat Y|\leq \BigO{dr\ln r}$ and for each $x\in X$ with $\mu(E_x)\geq 1/r$, $E_x\cap \hat Y$ is non-empty.
\end{theorem}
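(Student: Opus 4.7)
The plan is to prove the statement by the classical double-sampling argument of Vapnik and Chervonenkis, using the Shelah--Sauer bound from the previous lemma.

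First I would take $\hat Y$ to be a uniform random multiset of size $m$ drawn from $Y$, where $m = C d r \ln r$ for a constant $C$ to be determined. The goal is to show that with positive probability, every $x$ with $\mu(E_x) \geq 1/r$ satisfies $E_x \cap \hat Y \neq \emptyset$; then such a $\hat Y$ exists. Call an $x$ \emph{heavy} if $\mu(E_x) \geq 1/r$, and let $A$ be the event that some heavy $x$ has $E_x \cap \hat Y = \emptyset$. We need $\Pr(A) < 1$.

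The hard part is that we cannot directly union-bound over $x\in X$, since $X$ may be huge; we need a way to reduce to finitely many ``types'' via Shelah--Sauer. The standard trick is to introduce a ghost sample $\hat Y'$, an independent copy of $\hat Y$, and let $B$ be the event that some heavy $x$ has $E_x \cap \hat Y = \emptyset$ while $|E_x \cap \hat Y'| \geq m/(2r)$. A Chebyshev (or Chernoff) estimate on $|E_x \cap \hat Y'|$ for a fixed heavy $x$ shows that, once $m$ is large compared to $r$, we have $\Pr(B) \geq \tfrac{1}{2}\Pr(A)$, so it suffices to bound $\Pr(B)$.

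Next I would bound $\Pr(B)$ by conditioning on the combined multiset $Z = \hat Y \cup \hat Y'$ of size $2m$ and then viewing $\hat Y$ as a uniformly random half of $Z$. By the Shelah--Sauer bound (part (3) of the preceding lemma), the number of distinct traces $E_x \cap Z$ as $x$ varies is at most $(2em/d)^d$. For each such trace $T$ with $|T| \geq m/(2r)$, the conditional probability that all elements of $T$ land in $\hat Y'$ rather than $\hat Y$ is at most $2^{-|T|} \leq 2^{-m/(2r)}$. A union bound then gives
\[
\Pr(B) \leq \left(\frac{2em}{d}\right)^d 2^{-m/(2r)}.
\]
Finally I would choose $C$ large enough that, for $m = Cdr\ln r$, this quantity is less than $1/2$; taking logs, one needs $d\ln(2em/d) \leq (m\ln 2)/(2r) - 1$, which is satisfied once $C$ is an absolute constant (any $C$ noticeably larger than $2/\ln 2$ works, after absorbing lower-order terms into the assumption that $r$ is sufficiently large). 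Then $\Pr(A) \leq 2\Pr(B) < 1$, so some $\hat Y$ of size $m = \BigO{dr\ln r}$ has the required property.
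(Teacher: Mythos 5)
Your argument is correct: it is the standard Vapnik--Chervonenkis/Haussler--Welzl double-sampling proof, which is precisely the argument in the references the paper cites for this theorem (the paper states the $\epsilon$-net theorem as a black box and gives no proof of its own). The only cosmetic point is that you sample $\hat Y$ as a multiset, so at the end you should pass to its underlying set, which only decreases $|\hat Y|$ and preserves the property that $E_x\cap\hat Y\neq\emptyset$ for every $x$ with $\mu(E_x)\geq 1/r$.
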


\begin{definition}
  We say $\hat Y\subseteq Y$ is an \emph{$\epsilon$-net for differences} if whenever $E_x\cap \hat Y=E_{x'}\cap \hat Y$, $\mu(E_x\bigtriangleup E_{x'})<\epsilon$.
\end{definition}

\begin{lemma}\label{thm:diff_net}
  If $\{E_x\}_{x\in X}$ has VC dimension at most $d$, $d\geq 2$, then for each $r\geq 2$ there is an $\epsilon$-net for differences $\hat Y\subseteq Y$ such that $|\hat Y|\leq \BigO{dr\ln r}$.
\end{lemma}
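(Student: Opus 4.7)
The plan is to reduce the lemma to the $\epsilon$-net Theorem, applied not to the family $\{E_x\}_{x \in X}$ itself but to the family of symmetric differences
\[
\mathcal{F} = \{E_x \bigtriangleup E_{x'}\}_{(x,x') \in X \times X},
\]
viewed as subsets of $Y$ indexed by the set $X \times X$. The point of this reformulation is that $\hat Y$ being an $\epsilon$-net for differences (with $\epsilon = 1/r$) is exactly the contrapositive of the conclusion of the $\epsilon$-net Theorem applied to $\mathcal{F}$: if $(E_x \bigtriangleup E_{x'}) \cap \hat Y \neq \emptyset$ whenever $\mu(E_x \bigtriangleup E_{x'}) \geq 1/r$, then $E_x \cap \hat Y = E_{x'} \cap \hat Y$ forces $\mu(E_x \bigtriangleup E_{x'}) < 1/r$.

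The substantive step, and the only obstacle, is to bound the VC dimension of $\mathcal{F}$ by a constant multiple of $d$. First I would observe that any trace $(E_x \bigtriangleup E_{x'}) \cap I$ is completely determined by the pair $(E_x \cap I, \, E_{x'} \cap I)$. Hence by the Shelah--Sauer bound (part (3) of the lemma above), the number of distinct traces of $\mathcal{F}$ on a set $I \subseteq Y$ of size $n \geq d$ is at most
\[
\left(\frac{en}{d}\right)^{d} \cdot \left(\frac{en}{d}\right)^{d} = \left(\frac{en}{d}\right)^{2d}.
\]
If $I$ were shattered by $\mathcal{F}$, we would need $2^n \leq (en/d)^{2d}$, which fails for $n \geq c d$ for some absolute constant $c$. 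Thus $\mathcal{F}$ has VC dimension at most $cd = \BigO{d}$.

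With the VC dimension bound in hand, applying the $\epsilon$-net Theorem to $\mathcal{F}$ with the same parameter $r$ yields a set $\hat Y \subseteq Y$ of size $\BigO{cd \cdot r \ln r} = \BigO{dr \ln r}$ meeting every $E_x \bigtriangeup E_{x'}$ of normalized measure at least $1/r$. By the contrapositive reading above, this $\hat Y$ is exactly the $\epsilon$-net for differences that we want.

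The main work is the VC dimension computation for $\mathcal{F}$, but once one has identified that the claim amounts to an $\epsilon$-net statement for the symmetric-difference family, this is a routine application of Shelah--Sauer; the rest of the argument is bookkeeping and direct invocation of the cited theorem.
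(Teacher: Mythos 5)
Your proposal is correct and takes essentially the same approach as the paper: both reduce the statement to the $\epsilon$-net Theorem applied to a family of differences whose VC dimension is $\BigO{d}$. The only (harmless) variations are that the paper works with the one-sided differences $E_x\setminus E_{x'}$ at threshold $1/2r$ and cites the literature for the VC dimension bound $10d$, whereas you work with symmetric differences directly and derive the $\BigO{d}$ bound from Shelah--Sauer, which is a perfectly valid self-contained substitute.
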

\begin{proof}
  For $x,x'\in X$, write $E_{x-x'}=E_x\setminus E_{x'}$.  By \cite{MR2797943,MR876079}, the VC dimension of $\{E_{x-x'}\}_{x,x'\in X}$ is bounded by $10d$, so by the $\epsilon$-net theorem, there is a set $\hat Y\subseteq Y$ such that $|\hat Y|\leq \BigO{dr\ln 2r}=\BigO{dr\ln r}$ so that whenever $\mu(E_{x-x'})\geq 1/2r$, $E_{x-x'}\cap \hat Y$ is non-empty.  In particular, if $E_x\cap \hat Y=E_{x'}\cap \hat Y$ then
\[\mu(E_x\bigtriangleup E_{x'})=\mu(E_x\setminus E_{x'})+\mu(E_{x'}\setminus E_x)\leq 1/2r+1/2r=1/r.\]
\end{proof}


\section{Regularity}

In this section we fix a set $E\subseteq X\times Y$.  We write $d(X',Y')=\frac{\mu(E\cap(X',Y'))}{\mu(X')\mu(Y')}$.

Throughout this section, a partition $\mathcal{P}$ is a pair $(\{X_i\}_{i\leq n},\{Y_j\}_{j\leq m})$ where $\{X_i\}_{i\leq n}$ is a partition of $X$ and $\{Y_j\}_{j\leq m}$ is a partition of $Y$.  We set $|\mathcal{P}|=\max\{n,m\}$.

\begin{definition}
We say $\mathcal{P}'=(\{X'_i\},\{Y'_j\})$ \emph{refines} $\mathcal{P}=(\{X_i\},\{Y_j\})$ if for each $X_i$ and each $X'_{i'}$, either $X'_{i'}\subseteq X_i$ or $X'_{i'}\cap X_i=\emptyset$, and if for each $Y_j$ and each $Y'_{j'}$, either $Y'_{j'}\subseteq Y_j$ or $Y'_{j'}\cap Y_j=\emptyset$.
\end{definition}
Equivalently, $\mathcal{P}'$ refines $\mathcal{P}$ if for each $X_i$, there is some $I'$ such that $\{X_{i'}\}_{i'\in I'}$ is a partition of $X_i$, and similarly for each $Y_j$.  Clearly this relation is symmetric and transitive.

\begin{definition}
  We define
\[\rho(\mathcal{P})=\sum_{i\leq n,j\leq m}d^2(X_i,Y_j)\mu(X_i)\mu(Y_j).\]
\end{definition}

We recall the following standard facts about $\rho$:
\begin{lemma}
  \begin{enumerate}
  \item $0\leq \rho(\mathcal{P})\leq 1$,
  \item If $\mathcal{P}'$ refines $\mathcal{P}$ then $\rho(\mathcal{P})\leq\rho(\mathcal{P}')$.
  \end{enumerate}
\end{lemma}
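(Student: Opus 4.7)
The plan is to verify each part directly from the definitions. Part (1) is essentially an accounting check, while part (2) is the standard convexity argument that underlies the energy-increment proof of the regularity lemma.

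For the bounds in part (1), I would first observe that every term $d^2(X_i,Y_j)\mu(X_i)\mu(Y_j)$ is nonnegative, giving $\rho(\mathcal{P})\geq 0$. For the upper bound, I would use the fact that $d(X_i,Y_j)\leq 1$ (which follows from $\mu(E\cap(X_i\times Y_j))\leq\mu(X_i)\mu(Y_j)$), so $d^2(X_i,Y_j)\leq d(X_i,Y_j)$, and then
\[\rho(\mathcal{P})\leq \sum_{i,j}d(X_i,Y_j)\mu(X_i)\mu(Y_j)=\sum_{i,j}\mu(E\cap(X_i\times Y_j))=\mu(E)\leq 1.\]

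For part (2), I would reduce to the local statement inside a single block $X_i\times Y_j$. Let $\{X'_{i,k}\}_k$ and $\{Y'_{j,l}\}_l$ denote the pieces of $\mathcal{P}'$ lying inside $X_i$ and $Y_j$ respectively; these exist because $\mathcal{P}'$ refines $\mathcal{P}$. By additivity of measure,
\[d(X_i,Y_j)=\sum_{k,l}d(X'_{i,k},Y'_{j,l})\cdot\frac{\mu(X'_{i,k})\mu(Y'_{j,l})}{\mu(X_i)\mu(Y_j)},\]
expressing $d(X_i,Y_j)$ as a convex combination of the refined densities, since the weights $\frac{\mu(X'_{i,k})\mu(Y'_{j,l})}{\mu(X_i)\mu(Y_j)}$ are nonnegative and sum to $1$. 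Applying Jensen's inequality (equivalently, Cauchy--Schwarz) to the convex function $t\mapsto t^2$ yields
\[d^2(X_i,Y_j)\leq \sum_{k,l}d^2(X'_{i,k},Y'_{j,l})\cdot\frac{\mu(X'_{i,k})\mu(Y'_{j,l})}{\mu(X_i)\mu(Y_j)}.\]
Multiplying both sides by $\mu(X_i)\mu(Y_j)$ and summing over all $(i,j)$ gives $\rho(\mathcal{P})\leq\rho(\mathcal{P}')$.

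There is no real obstacle here; both claims are routine. The only point that requires attention is setting up the indexing correctly in part (2) so that the local convex-combination identity matches the structure imposed by $\mathcal{P}'$ refining $\mathcal{P}$, after which Jensen finishes the job.
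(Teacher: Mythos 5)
Your proof is correct and is exactly the standard argument: nonnegativity plus $d\le 1$ and additivity of $\mu(E\cap\cdot)$ for part (1), and the convex-combination identity followed by Jensen/Cauchy--Schwarz for part (2). The paper states this lemma without proof (as ``standard facts''), and your write-up supplies precisely the argument being invoked.
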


\begin{definition}
  A pair $(X_i,Y_j)$ is \emph{$\epsilon$-regular} if whenever $X'\subseteq X_i$, $Y'\subseteq Y_j$ with $\mu(X')\geq\epsilon\mu(X_i)$ and $\mu(Y')\geq\epsilon\mu(Y_j)$, we have
\[\left|d(X_i,Y_j)-d(X',Y')\right|<\epsilon.\]

If $\mathcal{P}$ is a partition, we write $\mathfrak{I}(\epsilon,\mathcal{P})$ for the set of $(i,j)$ such that $(X_i,Y_j)$ is \emph{not} $\epsilon$-regular.

We say $\mathcal{P}$ is $\epsilon$-regular if $\sum_{(i,j)\in\mathfrak{I}(\epsilon,\mathcal{P})}\mu(X_i)\mu(Y_j)<\epsilon$.
\end{definition}

\begin{definition}
  Let $\hat X\subseteq X, \hat Y\subseteq Y$ be finite sets.  The partition \emph{induced by $\hat X,\hat Y$} takes, for each $I\subseteq \hat Y$, $X_I=\{x\mid E_x\cap \hat Y=I\}$ and for $J\subseteq \hat X$, $Y_J=\{y\mid E^y\cap \hat X=J\}$.
\end{definition}

\begin{lemma}
  Suppose $\hat X,\hat Y$ are $\epsilon$-nets for differences and let $\mathcal{P}=(\{X_i\},\{Y_j\})$ be the partition induced by $\hat X,\hat Y$.  Then whenever $x,x'\in X_i$, we have $\mu(E_x\bigtriangleup E_{x'})<\epsilon$ and whenever $y,y'\in Y_j$, we have $\mu(E^y\bigtriangleup E^{y'})<\epsilon$.
\end{lemma}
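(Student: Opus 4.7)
The plan is essentially to unpack the definitions of the induced partition and of an $\epsilon$-net for differences, and observe that they match up immediately. No combinatorial estimate is required.

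First I would fix $i$ and let $x,x'\in X_i$. By construction of the partition induced by $\hat X,\hat Y$, each block on the $X$-side is of the form $X_I=\{x\mid E_x\cap \hat Y=I\}$ for some $I\subseteq \hat Y$. In particular, any two points in the same block $X_i=X_I$ satisfy $E_x\cap \hat Y = I = E_{x'}\cap \hat Y$. Since $\hat Y$ is assumed to be an $\epsilon$-net for differences (with respect to the family $\{E_x\}_{x\in X}$), the defining property of such a net applied to this equality gives $\mu(E_x\bigtriangleup E_{x'})<\epsilon$.

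For the second half, I would run the symmetric argument on the $Y$-side: the blocks $Y_j$ are of the form $Y_J=\{y\mid E^y\cap \hat X=J\}$, so any $y,y'\in Y_j$ have $E^y\cap \hat X = E^{y'}\cap \hat X$; applying the $\epsilon$-net for differences hypothesis on $\hat X$ relative to the dual family $\{E^y\}_{y\in Y}$ yields $\mu(E^y\bigtriangleup E^{y'})<\epsilon$. (One should note here that the hypothesis that both $\hat X$ and $\hat Y$ are $\epsilon$-nets for differences tacitly refers to different families on the two sides — $\{E_x\}_{x\in X}$ for $\hat Y$, and $\{E^y\}_{y\in Y}$ for $\hat X$ — but this is clear from context and is the only sensible reading.)

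There is really no obstacle: the lemma is a direct translation between the two definitions, and the proof will be only a few lines long. The only remark worth making is that the argument is exactly symmetric in $X$ and $Y$, so one can phrase the proof once and say ``similarly'' for the dual statement.
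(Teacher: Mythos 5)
Your proof is correct and is exactly the intended argument: the paper states this lemma without proof precisely because it is the immediate unpacking of the two definitions, as you observe. Your parenthetical about which family each net refers to ($\{E_x\}$ for $\hat Y$, $\{E^y\}$ for $\hat X$) is the right reading and worth the remark.
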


\begin{lemma}
  Let $\hat X\subseteq \hat X'\subseteq X, \hat Y\subseteq \hat Y'\subseteq Y$ be given.  Then the partition induced by $\hat X',\hat Y'$ refines the partition induced by $\hat X,\hat Y$.
\end{lemma}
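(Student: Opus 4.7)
The plan is to unpack the definitions and show directly that each part of the partition induced by $\hat X',\hat Y'$ is contained in a single part of the partition induced by $\hat X,\hat Y$. Since the definition treats $X$ and $Y$ symmetrically, I will only argue for the $X$-side; the $Y$-side is identical.

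Fix a part $X'_{I'}$ of the finer partition, where $I'\subseteq \hat Y'$ and $X'_{I'}=\{x\mid E_x\cap \hat Y'=I'\}$. For any $x\in X'_{I'}$, I will compute $E_x\cap \hat Y$. Using $\hat Y\subseteq \hat Y'$,
\[
E_x\cap \hat Y = (E_x\cap \hat Y')\cap \hat Y = I'\cap \hat Y.
\]
Setting $I:=I'\cap \hat Y\subseteq \hat Y$, every $x\in X'_{I'}$ satisfies $E_x\cap \hat Y=I$, so $X'_{I'}\subseteq X_I$. Since the parts $\{X_I\}_{I\subseteq \hat Y}$ are pairwise disjoint, $X'_{I'}$ meets no other part, giving the refinement condition on the $X$-side. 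The $Y$-side is symmetric, using $\hat X\subseteq \hat X'$ and $E^y\cap \hat X=(E^y\cap \hat X')\cap \hat X$.

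There is no real obstacle here; the lemma is essentially a bookkeeping observation that ``restricting the test set'' is compatible with the indexing of the induced partitions. The only thing to be slightly careful about is that an index $I'\subseteq \hat Y'$ of a nonempty part of the finer partition need not have the form $I'=J\cup K$ with $J\subseteq \hat Y$ and $K\subseteq \hat Y'\setminus \hat Y$ freely chosen, but this does not matter for the argument: we only need that $I'\cap \hat Y$ is well-defined and constant on $X'_{I'}$, which it trivially is.
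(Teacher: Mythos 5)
Your proof is correct; the paper leaves this lemma without proof precisely because the direct verification you give (that $E_x\cap\hat Y=(E_x\cap\hat Y')\cap\hat Y$ is constant on each part of the finer partition, and symmetrically for $Y$) is the intended argument. Nothing to add.
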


\begin{lemma}
Suppose $\mathcal{P}$ is the partition induced by $\hat X,\hat Y$ and is not $1/r$-regular.  Further, suppose $E$ has VC dimension at most $d$.  Then there are $\hat X'\supseteq\hat X$ and $\hat Y'\supseteq\hat Y$ such that the partion $\mathcal{P}'$ induced by $\hat X',\hat Y'$ satisfies:
  \begin{enumerate}
  \item $|\mathcal{P}'|\leq \BigO{\left(|\mathcal{P}|dr^3\ln r^3\right)^d}$,
  \item $\rho(\mathcal{P}')\geq\rho(\mathcal{P})+1/10^3r^7$.
  \end{enumerate}
\end{lemma}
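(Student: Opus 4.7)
Plan of proof. The plan is to build $\hat X',\hat Y'$ by adjoining to $\hat X,\hat Y$ a small $\epsilon$-net for differences \emph{inside each existing part}. Concretely, for each $j$ apply Lemma~\ref{thm:diff_net} to the restriction $E\cap(X\times Y_j)$, viewed with $Y_j$ as ambient probability space, at net parameter $1/s$ for some $s=\Theta(r^3)$, obtaining $\hat Y_j\subseteq Y_j$ of size $\BigO{ds\ln s}$; define $\hat X_i\subseteq X_i$ symmetrically. Take $\hat X'=\hat X\cup\bigcup_i\hat X_i$ and $\hat Y'=\hat Y\cup\bigcup_j\hat Y_j$. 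Since $n,m\le|\mathcal{P}|$, this gives $|\hat X'|,|\hat Y'|\le\BigO{|\mathcal{P}|\,ds\ln s}$, and the Sauer--Shelah bound gives $|\mathcal{P}'|\le(e|\hat Y'|/d)^d\le\BigO{(|\mathcal{P}|\,dr^3\ln r^3)^d}$, proving~(1).

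For~(2), fix for each $(i,j)\in I:=\mathfrak{I}(1/r,\mathcal{P})$ a witness $(A_{ij},B_{ij})$ with $\mu(A_{ij})\ge\mu(X_i)/r$, $\mu(B_{ij})\ge\mu(Y_j)/r$, and $|d(A_{ij},B_{ij})-d(X_i,Y_j)|\ge1/r$. Use the telescoping identity
\[d(A_{ij},B_{ij})-d(X_i,Y_j)=\bigl(d(A_{ij},B_{ij})-d(X_i,B_{ij})\bigr)+\bigl(d(X_i,B_{ij})-d(X_i,Y_j)\bigr);\]
at least one summand has magnitude $\ge 1/(2r)$. In the \emph{marginal} case $|d(X_i,B_{ij})-d(X_i,Y_j)|\ge 1/(2r)$, the function $g(y):=|E^y\cap X_i|/|X_i|$ on $Y_j$ has a gap of $\ge 1/(2r)$ between its $Y_j$- and $B_{ij}$-averages, so a Cauchy--Schwarz step using $\mu(B_{ij})/\mu(Y_j)\ge 1/r$ yields $L^2$-variance $\ge 1/(4r^3)$. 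The $\epsilon$-net-for-differences property of $\hat X_i$ makes $g$ oscillate by at most $1/s$ on each $\mathcal{P}'$-atom inside $Y_j$, so almost all this variance is captured by $\mathcal{P}'|_{Y_j}$; a Jensen step in the $X_i$-direction then converts captured variance to an energy increment $\Omega(\mu(X_i)\mu(Y_j)/r^3)$ on cell $(i,j)$. Summing over $I$ and using $\sum_{(i,j)\in I}\mu(X_i)\mu(Y_j)\ge 1/r$ gives total increment $\Omega(1/r^4)$, comfortably above the target.

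The main obstacle is the \emph{conditional} case $|d(A_{ij},B_{ij})-d(X_i,B_{ij})|\ge 1/(2r)$, where the analogous function $f(x):=|E_x\cap B_{ij}|/|B_{ij}|$ on $X_i$ depends on the arbitrary set $B_{ij}$, which need not be $\mathcal{P}'|_{Y_j}$-measurable. Applied to $f$, the $\epsilon$-net bound from $\hat Y_j$ loses a factor $|Y_j|/|B_{ij}|\le r$, so atomwise oscillation is $r/s$; the choice $s=\Theta(r^3)$ is exactly enough to keep this below $1/\sqrt{r^3}$ and preserve the variance lower bound of order $1/r^3$. Converting the captured variance of $f$ on $X_i$-atoms back to an energy increment on $\mathcal{P}'$-subcells of $X_i\times Y_j$ is the technical crux: one expresses $d(X_{i,\alpha},B_{ij})$ as a $B_{ij}$-weighted combination of $d(X_{i,\alpha},Y_{j,\beta})$ (valid up to $O(1/s)$-error via the dual $\epsilon$-net $\hat X_i$ controlling column oscillations within each $Y_{j,\beta}$) and applies Cauchy--Schwarz in $\beta$, picking up an extra factor $\mu(Y_j)/\mu(B_{ij})\le r$. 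The combined losses yield a per-cell increment $\Omega(\mu(X_i)\mu(Y_j)/r^4)$ in the conditional case, hence $\Omega(1/r^5)$ in total, and the explicit constants in the approximation steps absorb into the stated $1/(10^3 r^7)$ bound.
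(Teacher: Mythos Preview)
Your construction of $\hat X',\hat Y'$ and the Sauer--Shelah bound for part~(1) are exactly what the paper does. The difficulty is in part~(2), and specifically in your conditional case.

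The claim that $d(X_{i,\alpha},B_{ij})=\sum_\beta w_\beta\, d(X_{i,\alpha},Y_{j,\beta})+O(1/s)$ via the net $\hat X_i$ is not justified. The net $\hat X_i$ only guarantees that for $y,y'$ in the same $Y_{j,\beta}$ one has $\mu_i\bigl((E^y\bigtriangleup E^{y'})\cap X_i\bigr)<1/s$; it says nothing uniform about $\bigl|(E^y\bigtriangleup E^{y'})\cap X_{i,\alpha}\bigr|/|X_{i,\alpha}|$, which is what controls the oscillation of $y\mapsto d(X_{i,\alpha},\{y\})$. The pointwise error in your approximation is therefore $O\bigl(1/(s\,\mu_i(X_{i,\alpha}))\bigr)$, unbounded on small atoms. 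If you instead track the weighted $L^2$ error $\sum_\alpha \mu_i(X_{i,\alpha})\,e_\alpha^2$, a short computation gives only $O(1/s)$, so the effective $L^2$ deviation is $O(1/\sqrt{s})=O(r^{-3/2})$, the \emph{same} order as the signal $\sqrt{1/(4r^3)}$ you are trying to preserve. With $s=\Theta(r^3)$ the error term swallows the variance, and the per-cell increment you claim does not follow; fixing this would require a larger $s$, which feeds back into the exponent in~(1).

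The paper avoids this problem by never passing through atomwise densities $d(X_{i,\alpha},\,\cdot\,)$ at all. Starting from a witness $X',Y'$ with (say) $d(X',Y')\ge d(X_i,Y_j)+1/r$, it sets $\tilde X'=\{x\in X_i:\mu^j(E_x\cap Y')/\mu^j(Y')\ge d(X_i,Y_j)+1/(2r)\}$ and then \emph{rounds} to the $\mathcal{P}'$-measurable hull $\tilde X=\bigcup\{X'_{i'}:X'_{i'}\cap\tilde X'\neq\emptyset\}$. The $\hat Y_j$ net is applied once, row by row, to show every $x\in\tilde X$ still satisfies $\mu^j(E_x\cap Y')/\mu^j(Y')\ge d(X_i,Y_j)+2/(5r)$: this uses only the whole-cell bound $\mu^j(E_x\bigtriangleup E_{x'})<1/s$ together with $\mu^j(Y')\ge 1/r$, so the single loss is a factor $r$, not $1/\mu_i(X_{i,\alpha})$. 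A symmetric rounding in the $Y$-direction (now against the $\mathcal{P}'$-measurable set $\tilde X$) yields a $\mathcal{P}'$-measurable $\tilde Y$ with $d(\tilde X,\tilde Y)\ge d(X_i,Y_j)+1/(10r)$ and $\mu_i(\tilde X)\mu^j(\tilde Y)\ge 1/(10r^4)$. Since $\{\tilde X,X_i\setminus\tilde X\}\times\{\tilde Y,Y_j\setminus\tilde Y\}$ is refined by $\mathcal{P}'_{i,j}$, the standard defect inequality gives the cellwise increment $\ge 1/(10^3 r^6)$ directly, with no Cauchy--Schwarz in $\beta$ and no case split between ``marginal'' and ``conditional'' deviations.
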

\begin{proof}
Let $\mathcal{P}$ be the partition $(\{X_i\}_{i\leq n},\{Y_j\}_{j\leq m})$.  For each $i$, define a measure $\mu_i$ on subsets of $X_i$ by $\mu_i(X')=\frac{\mu(X')}{\mu(X_i)}=\frac{|X'|}{|X_i|}$.  Similarly, define a measure $\mu^j$ on subsets of $Y_j$ by $\mu^j(Y')=\frac{\mu(Y')}{\mu(Y_j)}=\frac{|Y'|}{|Y_j|}$.  Finally, define a measure $\mu_i^j$ on subsets of $X_i\times Y_j$ by $\mu_i^j(S)=\frac{\mu(S)}{\mu(X_i)\mu(Y_j)}$.

For each $i\leq n$, let $\hat X'_i$ be a $1/10r^3$-net for differences in $X_i$ with respect to the measure $\mu_i$; take $\hat X'=\hat X\cup\bigcup_{i\leq n}\hat X'_i$.  Similarly, for each $j\leq m$, let $\hat Y'_j$ be a $1/10r^3$-net for differences in $Y_j$ with respect to the measure $\mu^j$.  By Lemma \ref{thm:diff_net}, each $\hat X'_i$ and $\hat Y'_j$ may be taken to have size at most $\BigO{dr^3\ln r^3}$.  This means $|\hat X'|,|\hat Y'|\leq \BigO{|\mathcal{P}| dr^3\ln r^3}$.  Let $\mathcal{P}'$ be the partition induced by $\hat X',\hat Y'$.  By Shelah-Sauer, $|\mathcal{P}'|\leq \BigO{\left(|\mathcal{P}| dr^3\ln r^3\right)^d}$.

It remains to show that
\[\rho(\mathcal{P}')\geq\rho(\mathcal{P})+1/10^3r^7.\]
For each $i,j$, $\mathcal{P}'$ induces a partition $\mathcal{P}'_{i,j}$ of $X_i,Y_j$, and we have
\[\rho(\mathcal{P}')=\sum_{i\leq n,j\leq m}\rho_{i,j}(\mathcal{P}'_{i,j})\mu(X_i)\mu(Y_j)\]
where
\[\rho_{i,j}(\mathcal{P}'_{i,j})=\sum_{X'_{i'}\subseteq X_i,Y'_{j'}\subseteq Y_j}d^2(X'_{i'},Y'_{j'})\mu_i(X'_{i'})\mu^j(Y'_{j'}).\]
So it suffices to show that whenever $(i,j)\in\mathfrak{I}(1/r,\mathcal{P})$,
\[\rho_{i,j}(\mathcal{P}'_{i,j})\geq d^2(X_i,Y_j)+1/10^3r^6.\]

So consider some $(i,j)\in\mathfrak{I}(1/r,\mathcal{P})$, and let $\mathcal{P}'_{i,j}=(\{X'_{i'}\}_{i'\leq n'},\{Y'_{j'}\}_{j'\leq m'})$.  Let $X'\subseteq X_i, Y'\subseteq Y_j$ witness the failure of $1/r$-regularity.  That is, $\mu_i(X')\geq 1/r$, $\mu^j(Y')\geq 1/r$, and
\[\left|d(X',Y')-d(X_i,Y_j)\right|\geq 1/r.\]
Assume $d(X',Y')\geq d(X_i,Y_j)+1/r$ (the case where $d(X',Y')\leq d(X_i,Y_i)-1/r$ is symmetric).  Let $\tilde X'$ consist of those $x\in X_i$ such that $\frac{\mu^j(E_x\cap Y')}{\mu^j(Y')}\geq d(X_i,Y_j)+1/2r$; clearly $\frac{\mu_i(\tilde X')}{\mu_i(X')}\geq 1/2r$, and so $\mu_i(\tilde X')\geq 1/2r^2$.  Set
\[\tilde X=\bigcup\{X'_{i'}\mid X'_{i'}\cap \tilde X'\neq\emptyset\}.\]
Whenever $x\in\tilde X$, we have $x\in X'_{i'}$ and some $x'\in \tilde X'$ so that $\mu_j((E_x\bigtriangleup E_{x'})\cap Y_j)<1/10r^3$.  In particular, since $\frac{\mu^j(E_{x'}\cap Y')}{\mu^j(Y')}\geq d(X_i,Y_j)+1/2r$ and $\mu^j(Y')\geq 1/r$, $\frac{\mu^j(E_x\cap Y')}{\mu^j(Y')}\geq d(X_i,Y_j)+2/5r$.  Note that $\tilde X'\supseteq\tilde X$, and so $\mu_i(\tilde X)\geq 1/2r^2$.

Now let $\tilde Y'$ consist of those $y\in Y_j$ such that $\frac{\mu_i(E^y\cap \tilde X)}{\mu_i(\tilde X)}\geq d(X_i,Y_j)+1/5r$.  Clearly $\mu^j(\tilde Y')\geq 1/5r^2$.  Set
\[\tilde Y=\bigcup\{Y'_{j'}\mid Y'_{j'}\cap \tilde Y'\neq\emptyset\}.\]
Again, whenever $y\in\tilde Y$ we have a $y'$ with $\mu^i((E^y\bigtriangleup E^{y'})\cap X_i)<1/10r^3$ and $\frac{\mu_i(E^y\cap \tilde X)}{\mu_i(\tilde X)}\geq d(X_i,Y_j)+2/5r$, and therefore $\frac{\mu_i(E^y\cap \tilde X)}{\mu_i(\tilde X)}\geq d(X_i,Y_j)+1/10r$.  It follows that $d(\tilde X,\tilde Y)\geq d(X_i,Y_j)+1/10r$.

Consider the partition $\mathcal{P}^*_{i,j}=(\{\tilde X,X_i\setminus\tilde X\},\{\tilde Y,Y_j\setminus\tilde Y\})$.  $\mathcal{P}'_{i,j}$ refines $\mathcal{P}^*_{i,j}$ (since $\tilde X,\tilde Y$ were defined to be unions of components from $\mathcal{P}'_{i,j}$), so it suffices to show that
\[\rho_{i,j}(\mathcal{P}^*_{i,j})\geq d^2(X_i,Y_j)+1/10^3r^6.\]
Since $\mu(\tilde X)\mu(\tilde Y)\geq 1/10r^4$ and $d(\tilde X,\tilde Y)\geq d(X_i,Y_j)+1/10r$, this follows from standard calculations.


\end{proof}

\begin{theorem}
  If $E$ has VC dimension at most $d$, there is a $1/r$-regular partition $\mathcal{P}$ with $|\mathcal{P}|\leq \BigO{\left(dr^3\ln r^3\right)^{d^{2\cdot 10^3r^7}}}$.
\end{theorem}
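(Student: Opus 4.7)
The plan is to iterate the preceding energy-increment lemma, starting from the trivial partition and stopping once $1/r$-regularity is achieved.

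Set $\hat X_0 = \hat Y_0 = \emptyset$ and let $\mathcal{P}_0$ be the partition induced by $(\hat X_0, \hat Y_0)$, which is just $(\{X\},\{Y\})$, so $|\mathcal{P}_0| = 1$ and $\rho(\mathcal{P}_0) = d^2(X,Y) \geq 0$. I then iteratively define $\mathcal{P}_k$ as follows: if $\mathcal{P}_k$ is already $1/r$-regular, stop; otherwise, apply the previous lemma with input $(\hat X_k,\hat Y_k)$ to produce $\hat X_{k+1} \supseteq \hat X_k$, $\hat Y_{k+1} \supseteq \hat Y_k$, and let $\mathcal{P}_{k+1}$ be the induced partition. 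Because each $\mathcal{P}_k$ is induced by some $(\hat X_k, \hat Y_k)$, the lemma's hypotheses are maintained along the iteration, which is the one structural point to check.

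By the energy increment clause, $\rho(\mathcal{P}_{k+1}) \geq \rho(\mathcal{P}_k) + 1/10^3 r^7$. Since $0 \leq \rho \leq 1$, the procedure terminates after at most $K := 10^3 r^7$ steps, at which point $\mathcal{P}_K$ is $1/r$-regular.

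It remains to bound $|\mathcal{P}_K|$. Writing $C = \BigO{dr^3 \ln r^3}$ for the constant from Lemma~\ref{thm:diff_net} (absorbed into the $\BigO{\cdot}$ of clause (1) of the previous lemma), the bound reads $|\mathcal{P}_{k+1}| \leq (C|\mathcal{P}_k|)^d = C^d |\mathcal{P}_k|^d$. Unrolling this recursion from $|\mathcal{P}_0|=1$ gives
\[
|\mathcal{P}_k| \leq C^{d + d^2 + \cdots + d^k} \leq C^{d^{k+1}},
\]
so after $K$ steps,
\[
|\mathcal{P}_K| \leq C^{d^{K+1}} \leq \BigO{(dr^3 \ln r^3)^{d^{2 \cdot 10^3 r^7}}},
\]
which is the stated bound. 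The only real step with any content is verifying that the partitions produced by the lemma remain of the form "induced by $(\hat X_k, \hat Y_k)$" so the lemma can be reapplied; the rest is a geometric telescoping and a routine bound on iterations via the energy $\rho$.
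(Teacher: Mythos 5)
Your proposal is correct and follows essentially the same argument as the paper: iterate the increment lemma from the trivial partition, bound the number of steps by $10^3r^7$ via the energy $\rho$, and unroll the recursion $|\mathcal{P}_{k+1}|\leq (C|\mathcal{P}_k|)^d$ to obtain the stated size bound. Your observation that the partitions remain of the induced form is exactly why the lemma can be reapplied, and your exponent $d^{K+1}\leq d^{2\cdot 10^3r^7}$ matches the paper's inductive bound.
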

\begin{proof}
  Let $\mathcal{P}_0$ be the trivial partition $(\{X\},\{Y\})$.  Given $\mathcal{P}_i$ not $1/r$-regular, the previous lemma tells us there is a $\mathcal{P}_{i+1}$ refining $\mathcal{P}_i$ with $|\mathcal{P}_{i+1}|\leq \BigO{\left(|\mathcal{P}_i|dr^3\ln r^3\right)^d}$ and $\rho(\mathcal{P}_{i+1})\geq\rho(\mathcal{P}_i)+1/10^3r^7$.  Since $0\leq \mathcal{P}_0$ and $\mathcal{P}_n\leq 1$, we must have $n\leq 10^3r^7$, and therefore there is an $n\leq 10^3r^7$ with $\mathcal{P}_n$ $1/r$-regular.

It is easy to check inductively that 
\[|\mathcal{P}_i|\leq \BigO{\left(dr^3\ln r^3\right)^{d^{2i}}}.\]
\end{proof}

\bibliographystyle{plain}
\bibliography{../../Bibliographies/main}

\begin{thebibliography}{10}

\bibitem{MR876079}
R.~M. Dudley.
\newblock A course on empirical processes.
\newblock In {\em \'{E}cole d'\'et\'e de probabilit\'es de {S}aint-{F}lour,
  {XII}---1982}, volume 1097 of {\em Lecture Notes in Math.}, pages 1--142.
  Springer, Berlin, 1984.

\bibitem{MR1445389}
W.~T. Gowers.
\newblock Lower bounds of tower type for {S}zemer\'edi's uniformity lemma.
\newblock {\em Geom. Funct. Anal.}, 7(2):322--337, 1997.

\bibitem{MR884223}
David Haussler and Emo Welzl.
\newblock {$\epsilon$}-nets and simplex range queries.
\newblock {\em Discrete Comput. Geom.}, 2(2):127--151, 1987.

\bibitem{MR1139078}
J{\'a}nos Koml{\'o}s, J{\'a}nos Pach, and Gerhard Woeginger.
\newblock Almost tight bounds for {$\epsilon$}-nets.
\newblock {\em Discrete Comput. Geom.}, 7(2):163--173, 1992.

\bibitem{MR1171563}
Michael~C. Laskowski.
\newblock Vapnik-{C}hervonenkis classes of definable sets.
\newblock {\em J. London Math. Soc. (2)}, 45(2):377--384, 1992.

\bibitem{MR2815610}
L{\'a}szl{\'o} Lov{\'a}sz and Bal{\'a}zs Szegedy.
\newblock Regularity partitions and the topology of graphons.
\newblock In {\em An irregular mind}, volume~21 of {\em Bolyai Soc. Math.
  Stud.}, pages 415--446. J\'anos Bolyai Math. Soc., Budapest, 2010.

\bibitem{2011arXiv1102.3904M}
M.~{Malliaris} and S.~{Shelah}.
\newblock {Regularity lemmas for stable graphs}.
\newblock {\em ArXiv e-prints}, February 2011.

\bibitem{MR0307902}
N.~Sauer.
\newblock On the density of families of sets.
\newblock {\em J. Combinatorial Theory Ser. A}, 13:145--147, 1972.

\bibitem{MR0307903}
Saharon Shelah.
\newblock A combinatorial problem; stability and order for models and theories
  in infinitary languages.
\newblock {\em Pacific J. Math.}, 41:247--261, 1972.

\bibitem{MR540024}
Endre Szemer{\'e}di.
\newblock Regular partitions of graphs.
\newblock In {\em Probl\`emes combinatoires et th\'eorie des graphes ({C}olloq.
  {I}nternat. {CNRS}, {U}niv. {O}rsay, {O}rsay, 1976)}, volume 260 of {\em
  Colloq. Internat. CNRS}, pages 399--401. CNRS, Paris, 1978.

\bibitem{MR2797943}
Aad van~der Vaart and Jon~A. Wellner.
\newblock A note on bounds for {VC} dimensions.
\newblock In {\em High dimensional probability {V}: the {L}uminy volume},
  volume~5 of {\em Inst. Math. Stat. Collect.}, pages 103--107. Inst. Math.
  Statist., Beachwood, OH, 2009.

\end{thebibliography}
\end{document}